\newcommand{\ipc}{\textup{\textbf{IPC}\thinspace}}
\newcommand{\logic}{\textup{\textbf{IKT$\Box$}\thinspace}}
\newcommand{\nim}{\textup{\textbf{n1}\thinspace}}
\newcommand{\nima}{\textup{\textbf{n2}\thinspace}}
\newcommand{\multitop}{\textup{\textbf{t2d}\thinspace}}
\newcommand{\multitopp}{\textup{\textbf{t2}\thinspace}}
\newcommand{\multitoppp}{\textup{\textbf{t3}\thinspace}}
\newcommand{\kax}{\textup{\textit{K}\thinspace}}
\newcommand{\tax}{\textup{\textit{T}\thinspace}}
\newcommand{\rnbox}{\textup{\textit{RN}\thinspace}}
\newcommand{\modpon}{\textup{\textit{MP}\thinspace}}
\title[Multi-topological semantics for intuitionistic modal logic]{Multi-topological semantics \\ for intuitionistic modal logic}
\author{Tomasz Witczak}
\address{Institute of Mathematics\\ University of Silesia\\ Bankowa~14\\ 40-007 Katowice\\ Poland}
\email{tm.witczak@gmail.com}
\date{}
\theoremstyle{Theorem}
\newtheorem{tw}{Theorem}[section]
\theoremstyle{Lemma}
\theoremstyle{Definition}
\newtheorem{df}[tw]{Definition}
\theoremstyle{Remark}
\begin{document}

\maketitle

\begin{abstract}
We present three examples of \textit{multi-topological} semantics for intuitionistic modal logic with one modal operator $\Box$ (which behaves in some sense like necessity). We show that it is possible to treat neighborhood models, introduced earlier, as multi-topological. From the neighborhood point of view, our method is based on differences between properties of minimal and maximal neighborhoods. Also we propose transformation of multi-topological spaces into the neighborhood structures.
\end{abstract}

\section{Introduction}
In \cite{atw} we presented sound and complete neighborhood semantics for intuitionistic modal logic (\textit{i. m. l.}) with one modal operator $\Box$ (that of necessity). Our approach was based on the specific properties of minimal and maximal neighborhoods. This framework led us to the i.m.l. with rule of necessity and two modal axioms (\kax and \tax). Such system has been investigated by Bo\v{z}ic and Do\v{s}en in \cite{bozic} - but in bi-relational setting. We have shown that there is strict correspondence between this setting and our neighborhood semantics. 

As for the topological semantics for i.m.l., it has been investigated by Davoren in \cite{davoren}, \cite{dav} and Davoren et al. in \cite{davo}. That approach is more complicated than ours. First, those authors referred to the bi-relational structures with Fischer-Servi conditions (which are not satisfied in our neighborhood framework). Second, their structure consists of one topological space and specific binary relation between points of this space. Our idea is different. In fact, we do not work with one topological space but with a collection of such spaces. Thus, our frames are not strictly \textit{topological} but rather \textit{multi-topological}. Each space is just like maximal neighborhood. However, there are few problems to solve: for example, how to simulate minimal neighborhoods and how to define valuation in a manner that would guarantee pointwise equivalency of neighborhood models and corresponding multi-topological models. 

Another concept has been developed by Collinson et al. in \cite{coll}. It is based on the notion of \textit{topological p-morphism}. These authors started from the relational structures and they used some methods of category theory.

The main reason to study (multi)-topological semantics for i.m.l. is similar to the justification of topological studies in other non-classical logics. Topology allows us to discuss various interesting properties of frames (depending e.g. on axioms of separation or on the notions of density, compactness etc.). Moreover, sometimes these properties can be characterized by means of specific formulas. In our multi-topological setting we can also consider relationships between topological spaces (like inclusions).  However, in this research we concentrate only on basic features of structures in question. Moreover, we did not obtain topological completeness. Not only we did not prove it directly but also our translations between neighborhood structures (for which we have completeness) and multi-topological spaces (which are defined in three slightly different ways) are one-way. Thus, this paper can be considered as a first step in further studies.

\section{Alphabet and language}

Our basic system is named \logic. It has rather standard syntax (i.e. alphabet and language). We use the following notations:

\begin{enumerate}
\item $PV$ is a fixed denumerable set of propositional variables $p, q, r, s, ...$
\item Logical connectives and operators are $\land$, $\lor$, $\rightarrow$, $\bot$, $\Box$.
\item The only derived connective is $\lnot$ (which means that $\lnot \varphi$ is a shortcut for $\varphi \rightarrow \bot$).
\end{enumerate}

Formulas are generated recursively in a standard manner: if $\varphi$, $\psi$ are \textit{wff's} then also $\varphi \lor \psi$, $\varphi \land \psi$, $\varphi \rightarrow \psi$ and $\Box \varphi$. Semantic interpretation of propositional variables and all the connectives introduced above will be presented in the next section. Attention: $\Leftarrow, \Rightarrow$ and $\Leftrightarrow$ are used only on the level of (classical) meta-language.

\section{Neighborhood semantics}
\label{neigh}
\subsection{The definition of structure}

Neighborhoods for pure intuitionistic logic (i.e. without modalities) have been introduced by Moniri and Maleki in \cite{moniri}. If we speak about classical modal logic, then we should note that Pacuit presented an interesting survey of neighborhood semantics for such systems in \cite{pacuit}. Our basic structure is an intuitionistic neighborhood modal frame (\nima-frame) defined as it follows:

\begin{df}
\label{nimdef}
\nima-frame is an ordered pair $\langle W, \mathcal{N} \rangle$ where:

\begin{enumerate}
\item $W$ is a non-empty set (of worlds, states or points)

\item $\mathcal{N}$ is a function from $W$ into $P(P(W))$ such that:

\begin{enumerate}

\item $w \in \bigcap \mathcal{N}_{w}$

\item $\bigcap \mathcal{N}_{w} \in \mathcal{N}_{w}$

\item $u \in \bigcap \mathcal{N}_{w}$ $\Rightarrow$ $\bigcap \mathcal{N}_{u} \subseteq \bigcap \mathcal{N}_{w}$ ($\rightarrow$-\textit{condition})

\item $X \subseteq \bigcup \mathcal{N}_{w}$ and $\bigcap \mathcal{N}_{w} \subseteq X$ $\Rightarrow$ $X \in \mathcal{N}_{w}$ (\textit{relativized superset axiom})

\item $u \in \bigcap \mathcal{N}_{w} \Rightarrow \bigcup \mathcal{N}_{u} \subseteq \bigcup \mathcal{N}_{w}$ \textit{($\Box$-condition)}

\item $v \in \bigcup \mathcal{N}_{w}$ $\Rightarrow$ $\bigcap \mathcal{N}_{v} \subseteq \bigcup \mathcal{N}_{w}$ \textit{($T$-condition)}

\end{enumerate}

\end{enumerate}

\end{df}

\subsection{Valuation and model}

\begin{df}
Neighborhood \nima-model is a triple $F_{\mathcal{N}} = \langle W, \mathcal{N}, V_{\mathcal{N}} \rangle$, where $\langle W, \mathcal{N} \rangle$ is an \nima-frame and $V_{\mathcal{N}}$ is a function from $PV$ into $P(W)$ satisfying the following condition: if $w \in V_{\mathcal{N}}(q)$ then $\bigcap \mathcal{N}_{w} \subseteq V_{\mathcal{N}}(q)$.
\end{df}

\begin{df}
For every \nima-model $M_{\mathcal{N}} = \langle W, \mathcal{N}, V_{\mathcal{N}} \rangle$, forcing of formulas in a world $w \in W$ is defined inductively:
\end{df}
\begin{enumerate}

\item $w \nVdash \bot$

\item $w \Vdash q$ $\Leftrightarrow$ $w \in V_{\mathcal{N}}(q)$ for any $q \in PV$

\item $w \Vdash \varphi \lor \psi$ $\Leftrightarrow$ $w \Vdash \varphi$ or $w \Vdash \psi$

\item $w \Vdash \varphi \land \psi$ $\Leftrightarrow$ $w \Vdash \varphi$ and $w \Vdash \psi$

\item $w \Vdash \varphi \rightarrow \psi$ $\Leftrightarrow$ $\bigcap \mathcal{N}_{w} \subseteq \{v \in W; v \nVdash \varphi$ or $v \Vdash \psi\}$

\item $w \Vdash \Box \varphi$ $\Leftrightarrow$ $\bigcup \mathcal{N}_{w} \subseteq \{v \in W; v \Vdash \varphi\}$.

\end{enumerate}

As we said, $\lnot \varphi$ is a shortcut for $\varphi \rightarrow \bot$. Thus, $w \Vdash \lnot \varphi$ $\Leftrightarrow$ $\bigcap \mathcal{N}_{w} \subseteq \{v \in W; v \nVdash \varphi\}$.

There is also one technical annotation: sometimes we shall write $X \Vdash \varphi$ where $X$ would be a subset of $W$, in particular - minimal or maximal neighborhood (e.g. $\bigcap \mathcal{N}_{w} \Vdash \varphi$). It would mean that each element of $X$ forces $\varphi$.

As usually, we say that formula $\varphi$ is satisfied in a model $M_{\mathcal{N}} = \langle W, \mathcal{N}, V_{\mathcal{N}} \rangle$ when $w \Vdash \varphi$ for every $w \in W$. It is \textit{true} (tautology) when it is satisfied in each \nima-model.

\section{Neigborhood completeness}

In \cite{atw} we have shown (using slightly different symbols) that \nima-frames are sound and complete semantics for the logic \logic defined as the following set of formulas and rules: $\ipc \cup \{\kax, \tax, \rnbox, \modpon\}$, where:

\begin{enumerate}

\item \ipc is the set of all intuitionistic axiom schemes

\item \kax is the axiom scheme $\Box(\varphi \rightarrow \psi) \rightarrow (\Box \varphi \rightarrow \Box \psi)$

\item \tax is the axiom scheme $\Box \varphi \rightarrow \varphi$

\item \rnbox is the rule of necessity: $\varphi \vdash \Box \varphi$

\item \modpon is \textit{modus ponens}: $\varphi, \varphi \rightarrow \psi \vdash \psi$

\end{enumerate}

Completeness result has been established in two ways. First, directly - by means of prime theories and canonical neighborhood model. Second, indirectly - by the transformation into certain class of bi-relational frames, introduced by Bo\v{z}i\'{c} and Do\v{s}en in \cite{bozic} who proved their completeness. Basically, they used different set of axioms.

\section{Multi-topological frames}
\label{mtf}

\subsection{The definition of structure and model}

In this section we introduce the notion of \textit{multi-topological frame (model)}. Such structure can be roughly described as a collection of topological spaces with one valuation. Each space has its \textit{distinguished open set} which plays crucial role in the proof of translation between neighborhood and multi-topological settings.

\begin{df}
\label{multitop}

\multitop-model with distinguished sets is an ordered triple $M_{t} = \langle W, \mathfrak{W}, V_{t} \rangle$ where:

\begin{enumerate}

\item $W \neq \emptyset$.

\item $\mathfrak{W} = \{ \langle T, \tau, D^{T} \rangle \}$, where $T \subseteq W$, $\tau$ is a topology on $T$ (thus $\langle T, \tau \rangle$ is a topological space) and $D^{T} \in \tau, D^{T} \neq \emptyset$.

\item $W = \bigcup \mathcal{T}$, where $\mathcal{T} = \{ T; T \in \langle T, \tau, D^{T} \rangle \in \mathfrak{W} \}$.

\item $V_{t}$ is a function from $PV$ into $P(W)$ satisfying the following condition: $V_{t}(q) = \bigcup \mathcal{X}$ where $\mathcal{X} \subseteq \{X \subseteq W$; there is $\langle T, \tau, D^{T} \rangle \in \mathfrak{W}$ for which $X \in \tau\}$.

\end{enumerate}

\end{df}

The third condition can be formulated also as follows: for each $w \in W$ there is $\langle T, \tau, D^{T} \rangle \in \mathfrak{W}$ such that $w \in T$. Hence, each point of $W$ is in certain topological space. As for the valuation of complex formulas, it is based on the valuation of propositional variables and defined inductively:

\begin{df}
For every \multitop-model $M_{t} = \langle W, \mathfrak{W}, V_{t} \rangle$, valuation of formulas is defined as such:

\begin{enumerate}

\item $V_{t}(\varphi \land \psi) = V_{t}(\varphi) \cap V_{t}(\psi)$

\item $V_{t}(\varphi \lor \psi) = V_{t}(\varphi) \cup V_{t}(\psi)$

\item $V_{t}(\varphi \rightarrow \psi) = \bigcup_{\tau} Int_{\tau} \left(T \setminus (V_{t}(\varphi) \cap -V_{t}(\psi)) \right)$

\item $V_{t}(\Box \varphi) = \bigcup \mathcal{X}$ where $\mathcal{X} = \{X \subseteq W$ such that $X = D^{T}$ for at least one $\langle T, \tau, D^{T} \rangle$ in $\mathfrak{W}$ such that $T \subseteq V(\varphi)\}$.

\end{enumerate}

\end{df}

A few words of comment should be made. As for the valuation of propositional variables, we assume that $V_t(q)$ is a union of sets which are open at least in one topology. Then we go through all the universes and in each one we take standard topological \textit{truth set} for implication. In the next step we form union of all such truth sets. The last important thing is modality: we check which universes satisfy $\varphi$ (which means that they are wholly contained in $V(\varphi)$) and then we take their distinguished sets. Finally, we prepare union of these sets. 

We say that formula $\varphi$ is true \textit{iff} in each \multitop-model $M_{t} = \langle W, \mathfrak{W}, V_{t} \rangle$ we have $V_{t}(\varphi) = W$.

\section{From neighborhood frames to multi-topological structures}

\subsection{Basic notions}

In this section we show that it is possible to treat neighborhood models as multi-topological. First, let us introduce the notion of $w$-open sets.

\begin{df}
\label{wopen}
We say that set $X \subseteq W$ is $w$-open in \nima-frame \textit{iff} $X \subseteq \bigcup \mathcal{N}_{w}$ and for every $v \in X$ we have $\bigcap \mathcal{N}_{v} \subseteq X$. We define $\mathcal{O}_{w}$ as $\{X \subseteq W; X $ are $w$-open $\}$ and call it $w$-topology.
\end{df}

Let us check that this definition is useful for our needs.

\begin{tw}
\label{topol}
Assume that we have \nima-frame $F_{\mathcal{N}} = \langle W, \mathcal{N} \rangle$. Then $\mathcal{O}_{w}$ is a topological space for every $w \in W$.
\end{tw}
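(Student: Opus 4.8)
The plan is to verify directly the three closure conditions that define a topology on $\mathcal{O}_w$: that $\emptyset$ and $\bigcup \mathcal{N}_w$ (the whole space $T = \bigcup\mathcal{N}_w$) are $w$-open, that $\mathcal{O}_w$ is closed under arbitrary unions, and that it is closed under finite (equivalently, binary) intersections. I first fix $w$ and recall that $X$ is $w$-open iff $X \subseteq \bigcup\mathcal{N}_w$ and $\bigcap\mathcal{N}_v \subseteq X$ for every $v \in X$; note that the underlying set of the would-be space is $\bigcup\mathcal{N}_w$ itself, so I should really say ``$\langle \bigcup\mathcal{N}_w, \mathcal{O}_w\rangle$ is a topological space''.

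The easy steps come first. The set $\emptyset$ is trivially $w$-open (both conditions hold vacuously). For the whole set, $X = \bigcup\mathcal{N}_w$ clearly satisfies $X \subseteq \bigcup\mathcal{N}_w$; and for any $v \in \bigcup\mathcal{N}_w$ we have $\bigcap\mathcal{N}_v \subseteq \bigcup\mathcal{N}_w$ by the $T$-condition in Definition \ref{nimdef}, so $\bigcup\mathcal{N}_w \in \mathcal{O}_w$. For arbitrary unions: let $\{X_i\}_{i\in I} \subseteq \mathcal{O}_w$ and put $X = \bigcup_i X_i$. Then $X \subseteq \bigcup\mathcal{N}_w$ since each $X_i$ is. If $v \in X$, then $v \in X_i$ for some $i$, hence $\bigcap\mathcal{N}_v \subseteq X_i \subseteq X$. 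So $X \in \mathcal{O}_w$.

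The binary intersection is the step to be careful with. Let $X, Y \in \mathcal{O}_w$ and $v \in X \cap Y$. Since $v \in X$ we get $\bigcap\mathcal{N}_v \subseteq X$, and since $v \in Y$ we get $\bigcap\mathcal{N}_v \subseteq Y$, hence $\bigcap\mathcal{N}_v \subseteq X \cap Y$; and $X \cap Y \subseteq \bigcup\mathcal{N}_w$ is immediate. Thus $X \cap Y \in \mathcal{O}_w$, and by induction $\mathcal{O}_w$ is closed under all finite intersections. This argument is in fact entirely formal — it never actually uses the $\rightarrow$-condition or the $\Box$-condition — so the only real content is the use of the $T$-condition to put the ambient set $\bigcup\mathcal{N}_w$ into $\mathcal{O}_w$; that is the one place where a frame axiom is genuinely needed, and it is where I would expect the reader to want the reference made explicit. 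With the three closure properties established, $\langle \bigcup\mathcal{N}_w, \mathcal{O}_w\rangle$ is a topological space for every $w \in W$, which completes the proof.
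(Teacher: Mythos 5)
Your proof is correct and follows essentially the same route as the paper's: a direct verification of the topology axioms, with the $T$-condition invoked at exactly the same single point, namely to show that the ambient set $\bigcup\mathcal{N}_{w}$ is itself $w$-open. The only divergence is that the paper checks closure under \emph{arbitrary} intersections rather than just finite ones --- a stronger fact not needed for the theorem as stated, but one that silently establishes the Alexandroff property of $\mathcal{O}_{w}$ that the author appeals to later in the paper.
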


\begin{proof}
Let us check standard properties of well-defined topology.
\begin{enumerate}

\item Take empty set. We can say that $\emptyset \in \mathcal{O}_{w}$ because $\emptyset \subseteq \bigcup \mathcal{N}_{w}$ and there are no any $v$ in $\emptyset$.

\item Consider $\bigcup \mathcal{N}_{w}$. Clearly this set is contained in itself and because of $T$-condition we have that for every $v \in \bigcup \mathcal{N}_{w}$ the second condition holds: $\bigcap \mathcal{N}_{v} \subseteq \bigcup \mathcal{N}_{w}$.

\item Consider $\mathscr{X} \subseteq \mathcal{O}_{w}$. We show that $\bigcap \mathscr{X} \in \mathcal{O}_{w}$. The first condition is simple: every element of $\mathscr{X}$ belongs to $\mathcal{O}_{w}$ so it is contained in $\bigcup \mathcal{N}_{w}$. The same holds of course for intersection of all such elements.

    Now let $v \in \bigcap \mathscr{X}$. By the definition we have that $\bigcap \mathcal{N}_{v} \subseteq X$ for every $X \in \mathscr{X}$. Then $\bigcap \mathcal{N}_{v} \subseteq \bigcap \mathscr{X}$.

\item In the last case we deal with arbitrary unions. Suppose that $\mathscr{X} \subseteq \mathcal{O}_{w}$ and consider $\bigcup \mathscr{X}$. Surely this union is contained in $\bigcup \mathcal{N}_{w}$. Now let us take an arbitrary $v \in \bigcup \mathscr{X}$. We know that $\bigcap \mathcal{N}_{v} \subseteq X$ for some $X \in \mathscr{X}$ (in fact, it holds for every $X$ which contains $v$). Then clearly $\bigcap \mathcal{N}_{v} \subseteq \bigcup \mathscr{X}$.

\end{enumerate}

\end{proof}

\begin{figure}[ht]
\centering
\includegraphics[height=4cm]{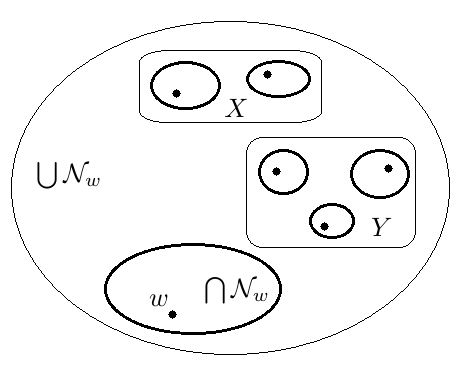}
\caption{Topology ${O}_{w}$. X, Y are $w$-open.}
\label{fig:obrazek {mtop1}}
\end{figure}

One thing should be noted. Clearly, we used $T$-condition to assure that the whole maximal $w$-neighborhood is $w$-open. Basically, in \cite{atw}, we worked with structures without $T$-condition (we may call them \nim-frames). Completeness theorem holds also for them - but it would be at least problematic to treat those frames as multi-topological.

\subsection{Transformation}

\begin{tw}
For each \nima-model $M_{\mathcal{N}} = \langle W, \mathcal{N}, V_{\mathcal{N}} \rangle$ there exists \multitop-model $M_{t} = \langle W, \mathfrak{W}, V_{t} \rangle$ which is \textit{pointwise equivalent} to $M_{\mathcal{N}}$, i.e. $w \Vdash \varphi \Leftrightarrow w \in V_{t} (\varphi)$.
\end{tw}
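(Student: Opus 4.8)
The plan is to build the multi-topological model directly from the neighborhood model, using the $w$-topologies constructed in Theorem~\ref{topol}. Given $M_{\mathcal{N}} = \langle W, \mathcal{N}, V_{\mathcal{N}} \rangle$, for each $w \in W$ I would put $T_w = \bigcup \mathcal{N}_w$ with the subspace topology consisting of the $w$-open sets $\mathcal{O}_w$ (restricted to subsets of $T_w$, which they all are by definition), and I would take the distinguished open set to be $D^{T_w} = \bigcap \mathcal{N}_w$. One must first check this is legitimate: $\bigcap \mathcal{N}_w$ is $w$-open because $\bigcap \mathcal{N}_w \subseteq \bigcup \mathcal{N}_w$ and, by the $\rightarrow$-condition, $v \in \bigcap \mathcal{N}_w$ implies $\bigcap \mathcal{N}_v \subseteq \bigcap \mathcal{N}_w$; and it is nonempty since $w \in \bigcap \mathcal{N}_w$. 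Setting $\mathfrak{W} = \{ \langle T_w, \mathcal{O}_w, \bigcap \mathcal{N}_w \rangle ; w \in W \}$, condition (3) of Definition~\ref{multitop} holds because $w \in T_w$ for every $w$. For the valuation I would define $V_t(q) = V_{\mathcal{N}}(q)$ on propositional variables and check that this satisfies the openness requirement of Definition~\ref{multitop}: by the constraint on $V_{\mathcal{N}}$, if $w \in V_{\mathcal{N}}(q)$ then $\bigcap \mathcal{N}_w \subseteq V_{\mathcal{N}}(q)$, so $V_{\mathcal{N}}(q) = \bigcup_{w \in V_{\mathcal{N}}(q)} \bigcap \mathcal{N}_w$, a union of distinguished (hence open) sets.

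With the model in place, the core of the argument is an induction on the complexity of $\varphi$ proving $w \Vdash \varphi \Leftrightarrow w \in V_t(\varphi)$. The base case $\varphi = q$ is the definition, and $\varphi = \bot$ is immediate since $V_t(\bot) = \emptyset$. The conjunction and disjunction cases are routine from the inductive hypothesis, since both the forcing clauses and the $V_t$-clauses commute with $\cap$ and $\cup$. For implication I would compare the neighborhood clause $w \Vdash \varphi \rightarrow \psi \Leftrightarrow \bigcap \mathcal{N}_w \subseteq \{v ; v \nVdash \varphi \text{ or } v \Vdash \psi\}$ with $V_t(\varphi \rightarrow \psi) = \bigcup_{\tau} Int_\tau(T \setminus (V_t(\varphi) \cap -V_t(\psi)))$. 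Using the inductive hypothesis, $\{v ; v \nVdash \varphi \text{ or } v \Vdash \psi\}$ equals the complement of $V_t(\varphi) \cap -V_t(\psi)$; the point is then that $w$ lies in the $w$-interior of this complement (inside $T_w$) precisely when $\bigcap \mathcal{N}_w$ is contained in it, because $\bigcap \mathcal{N}_w$ is the smallest $w$-open set containing $w$ — this is exactly the analogue of the "minimal open neighborhood" phenomenon, and it is what makes the interior operator in the $w$-topology behave like the $\bigcap \mathcal{N}_w$ quantifier. I would also need to check that contributions from other topologies $\tau'$ in the union cannot put $w$ into $V_t(\varphi \rightarrow \psi)$ spuriously; here the $\rightarrow$-condition and the nesting of minimal neighborhoods should guarantee consistency.

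For the modal case I would unfold $w \Vdash \Box \varphi \Leftrightarrow \bigcup \mathcal{N}_w \subseteq \{v ; v \Vdash \varphi\}$, which by the inductive hypothesis says $T_w = \bigcup \mathcal{N}_w \subseteq V_t(\varphi)$. On the other side, $w \in V_t(\Box \varphi)$ means $w$ belongs to the distinguished set $D^{T'} = \bigcap \mathcal{N}_{w'}$ of some space with $T_{w'} \subseteq V_t(\varphi)$. The direction ($\Rightarrow$) is easy: take $w' = w$, since $w \in \bigcap \mathcal{N}_w$ and $T_w \subseteq V_t(\varphi)$. For ($\Leftarrow$): if $w \in \bigcap \mathcal{N}_{w'}$ and $\bigcup \mathcal{N}_{w'} \subseteq V_t(\varphi)$, then by the $\Box$-condition $\bigcup \mathcal{N}_w \subseteq \bigcup \mathcal{N}_{w'} \subseteq V_t(\varphi)$, hence $T_w \subseteq V_t(\varphi)$, i.e. $w \Vdash \Box \varphi$. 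I expect the main obstacle to be the implication step — specifically, showing that the union over all topologies in $V_t(\varphi \rightarrow \psi)$ collapses to the single condition on $\bigcap \mathcal{N}_w$, and that membership of $w$ in an interior taken in some other space $\langle T', \tau', D^{T'} \rangle$ forces the same containment; this requires carefully using the $\rightarrow$-condition together with the fact that $w \in T'$ implies $\bigcap \mathcal{N}_w \subseteq \bigcap \mathcal{N}_{w'} \subseteq T'$ when $w \in \bigcap \mathcal{N}_{w'}$, so that $w$-openness and $w'$-openness interact coherently on the relevant sets.
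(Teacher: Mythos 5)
Your construction is exactly the paper's: the same family $\mathfrak{W} = \{\langle \bigcup \mathcal{N}_w, \mathcal{O}_w, \bigcap \mathcal{N}_w\rangle ; w \in W\}$, the same valuation $V_t(q)=V_{\mathcal{N}}(q)$, and the same induction, so the proposal is correct and follows the paper's route. The one point you flag as a possible obstacle --- that $w$ might land ``spuriously'' in an interior taken in some other space $\mathcal{O}_x$ --- dissolves immediately by Definition~\ref{wopen}: any $x$-open set containing $w$ already contains $\bigcap \mathcal{N}_w$ by the second clause of $x$-openness (applied to $v=w$), which is precisely how the paper closes the $(\Leftarrow)$ direction for implication, with no further appeal to the $\rightarrow$-condition or to nesting of minimal neighborhoods.
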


\begin{proof}
Assume that we have $M_{\mathcal{N}} = \langle W, \mathcal{N}, V_{\mathcal{N}} \rangle$. Now let us consider the following structure: $M_{t} = \langle W, \mathfrak{W}, V_{t} \rangle$ where:

\begin{enumerate}

\item $\mathfrak{W} = \{ \langle \bigcup \mathcal{N}_{w}, \mathcal{O}_{w}, \bigcap \mathcal{N}_{w} \rangle; w \in W \}$

\item for each $q \in PV$, $V_{t}(q) = V_{\mathcal{N}}(q)$

\end{enumerate}

It is easy to check that this is well-defined \multitop-frame. For each $w \in W$ we can treat $\bigcup \mathcal{N}_{w}$ as universe of topological space. Thus $\bigcap \mathcal{N}_{w}$ can be treated as distinguished set in this particular space. Now let us prove pointwise equivalency. Here we use induction by the complexity of formulas.

\begin{enumerate}

\item $\rightarrow$:

($\Rightarrow$)
Suppose that $w \Vdash \varphi \rightarrow \psi$. We want to show there exists certain $\langle \bigcup \mathcal{N}_{x}, \mathcal{O}_{x}, \bigcap \mathcal{N}_{x} \rangle \in \mathfrak{W}$ such that $w \in Int_{x}\left(\bigcup \mathcal{N}_{x} \setminus (V(\varphi) \cap -V(\psi))\right)$.

At first, we can say that $w \in \bigcap \mathcal{N}_{w} \subseteq \{x \in W; x \nVdash \varphi$ or $x \Vdash \psi\}$. But by induction hypothesis, this last set can be written as $\{x \in W; x \notin V_{t}(\varphi)$ or $x \in V_{t}(\psi)\} = - \left(V_{t}(\varphi) \cap -V_{t}(\psi)\right)$. Recall the fact that $ \bigcap \mathcal{N}_{w} \subseteq \bigcup \mathcal{N}_{w}$. Thus $w \in \bigcap \mathcal{N}_{w} \subseteq - \left(V_{t}(\varphi) \cap -V_{t}(\psi)\right) \cap \bigcup \mathcal{N}_{w} = \left(\bigcup \mathcal{N}_{w} \setminus (V_{t}(\varphi) \cap -V_{t}(\psi))\right)$.

But of course $\bigcap \mathcal{N}_{w}$ is $w$-open. Thus it is contained in $Int_{w} \left(\bigcup \mathcal{N}_{w} \setminus (V_{t}(\varphi) \cap -V_{t}(\psi))\right)$. We see that we could treat $w$ as our $x$.

$\Leftarrow$
Now we assume that $w \in V_{t}(\varphi \rightarrow \psi)$. Thus we have certain $\langle \bigcup \mathcal{N}_{x}, \mathcal{O}_{x}, \bigcap \mathcal{N}_{x} \rangle \in \mathfrak{W}$ such that $w \in Int_{x}\left(\bigcup \mathcal{N}_{x} \setminus (V_{t}(\varphi) \cap -V_{t}(\psi))\right)$. By induction hypothesis, we can say that $w \in Int_{x} \left(\bigcup \mathcal{N}_{x} \setminus \{z \in W; z \Vdash \varphi \land z \nVdash \psi\} \right)$.
Hence, $w$ belongs to the biggest $x$-open set $X$ such that $X \subseteq \{z \in W; z \nVdash \varphi \text{ or } z \Vdash \psi\}$.

But if $X$ is $x$-open then (by the definition of topology $\mathcal{O}_{x}$) we can say that $\bigcap \mathcal{N}_{w} \subseteq X$. In particular, $\bigcap \mathcal{N}_{w} \subseteq \{z \in W; z \nVdash \varphi$ or $z \Vdash \psi\}$. Thus $w \Vdash \varphi \rightarrow \psi$.

\item $\Box$:

$\Rightarrow$

Assume that $w \Vdash \Box \varphi$. Thus $\bigcup \mathcal{N}_{w} \subseteq \{x \in W; x \Vdash \varphi\}$. We want to show that $w \in V_{t}(\Box \varphi)$, i.e. that there is $X \subseteq W$ such that $w \in X$ and for certain $\langle \bigcup \mathcal{N}_{x}, \mathcal{O}_{x}, \bigcap \mathcal{N}_{x} \rangle \in \mathfrak{W}$ we have: $X = \bigcap \mathcal{N}_{x}$, $\bigcup \mathcal{N}_{x} \subseteq V_{t}(\varphi)$.

Surely, we can take $x = w$. Now, if $w \Vdash \Box \varphi$, then $\bigcup \mathcal{N}_{w} \subseteq V_{\mathcal{N}}(\varphi)$. By induction hypothesis, $\bigcup \mathcal{N}_{w} \subseteq V_{t}(\varphi)$.

$\Leftarrow$

Suppose that $w \in V_{t}(\Box \varphi)$. Thus $w \in X \subseteq W$ such that for certain $\langle \bigcup \mathcal{N}_{x}, \mathcal{O}_{x}, \bigcap \mathcal{N}_{x} \rangle \in \mathfrak{W}$ we can say that $X = \bigcap \mathcal{N}_{x}$ and $\bigcup \mathcal{N}_{x} \subseteq V_{t}(\varphi)$.

If $\bigcup \mathcal{N}_{s} \subseteq V_{t}(\varphi)$, then - by induction hypothesis - $\bigcup \mathcal{N}_{x} \subseteq V_{\mathcal{N}}(\varphi)$. Thus $x \Vdash \Box \varphi$. But $w \in \bigcap \mathcal{N}_{x}$. Thus, by the monotonicity of intuitionistic forcing, $w \Vdash \Box \varphi$.

\end{enumerate}

\end{proof}

\begin{figure}[ht]
\centering
\includegraphics[height=3.5cm]{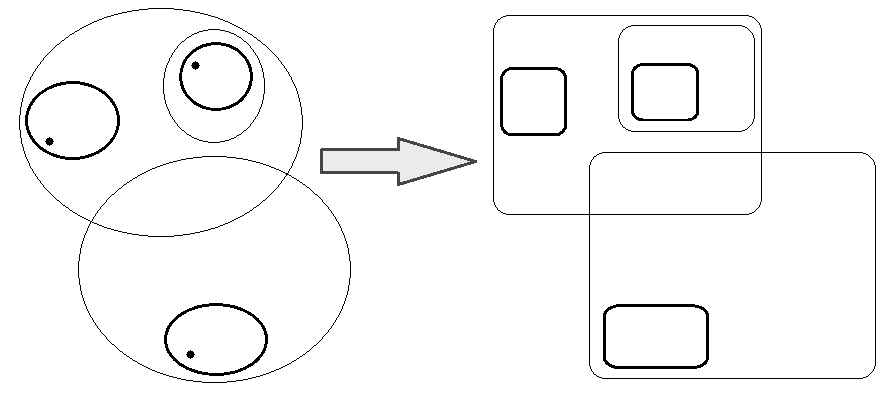}
\caption{From neighborhoods to multi-topological space with distinguished sets.}
\label{fig:obrazek {mtop2}}
\end{figure}

\section{From multi-topological structures to neighborhood structures}
\label{sect}
In the former section we used multi-topological structures with distinguished open sets $D^{T}$. Those sets can be considered as analogues or images of minimal $w$-neighborhoods (while universes of topological spaces played the role of maximal $w$-neighborhoods). We used such unconventional approach mainly because our topology $\mathcal{O}_{w}$ does not "recognize" minimal neighborhoods. Thus, if we have $\bigcup \mathcal{N}_{w}$, then from the neighborhood point of view $\bigcap \mathcal{N}_{w}$ is specific - but as $w$-open set it is not distinguished in any way from other $w$-open sets. But we need such distinction to establish correspondence between $V_{\mathcal{N}}$ and $V_{t}$.

Now we are on the other side: we start from multi-topological structures but defined in slightly different way. Here we do not have $D^{T}$ sets - but our assumptions about topologies are stronger. In the former sections we treated each $\tau$ as an arbitrary topological space, even if it is clear that for each $w \in W$, our $\mathcal{O}_{w}$ must be an Alexandroff space. It means that each intersection of $w$-open sets is open (or - equivalently - that each point has a minimal neighborhood, where the notion of neighborhood is understood in a topological sense).

Now we \textit{start} from Alexandroff spaces. Thus, we have the following definition (of frame):

\begin{df}
\label{multitop}

\multitopp-frame is an ordered pair $\langle W, \mathfrak{W} \rangle$ where:

\begin{enumerate}

\item $W \neq \emptyset$

\item $\mathfrak{W} = \{ \langle T, \tau \rangle \}$, where $T \subseteq U$ and $\tau$ is an Alexandroff topology on $T$.

\item $W = \bigcup \mathcal{T}$, where $\mathcal{T} = \{ T; T \in \langle T, \tau \rangle \in \mathfrak{W} \}$

\end{enumerate}
\end{df}

Each $\langle T, \tau \rangle$ is an Alexandroff space, so each $w \in T$ has its minimal $\tau$-open neighborhood. If we denote the family of $\tau$-open $w$-neighborhoods as $\mathcal{O}^{w}_{\tau}$, then we can introduce the following notation: $\bigcap \mathcal{O}^{w}_{\tau} = \text{min} \mathcal{O}^{w}_{\tau}$.

Now we discuss intersection of all such minimal $\tau$-open $w$-neighborhoods. It will be denoted as $\bigcap_{\langle T, \tau \rangle \in \mathfrak{U}} \{\text{min} \mathcal{O}^{w}_{\tau}\}$. In the next step we show how to define (in this topological environment) certain kind of \textit{neighborhoods} (but in the sense of \nima-frames).

\begin{df}
\label{minmax}
Assume that we have \multitopp-frame $\langle W, \mathfrak{W} \rangle$. Then for each $w \in W$ we define:

\begin{enumerate}
\item $\bigcap \mathcal{N}^{t}_{w} = \bigcap_{\langle T, \tau \rangle \in \mathcal{T}^{w}} \{\text{min} \mathcal{O}^{w}_{\tau}\}$, where $\mathcal{T}^{w} = \{ \langle T, \tau \rangle \in \mathfrak{W}; w \in T \}$.

\item $\bigcup \mathcal{N}^{t}_{w} = \bigcap \mathcal{T}^{w}$.

\item $X \in \mathcal{N}^{t}_{w} \subseteq P(P(W)) \Leftrightarrow \bigcap \mathcal{N}^{t}_{w} \subseteq X \subseteq \bigcup \mathcal{N}^{t}_{w}$.

\end{enumerate}

\end{df}

\begin{tw}
Assume that we have \multitopp-frame $\langle W, \mathfrak{W} \rangle$ with $\mathcal{N}^{t}_{w}$ defined as in Def. \ref{minmax}. We state that for each $w \in U$, $\mathcal{N}^{t}_{w}$ has all the properties of neighborhood family in \nima-frame.
\end{tw}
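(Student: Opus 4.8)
The plan is to verify, one by one, the six conditions (a)--(f) from Definition~\ref{nimdef}, using the definitions in Def.~\ref{minmax} together with the fact that each $\langle T,\tau\rangle$ is Alexandroff (so that $\mathrm{min}\,\mathcal{O}^{w}_{\tau}$ exists and is the smallest $\tau$-open set containing $w$). Throughout I would write $\bigcap\mathcal{N}^{t}_{w}=\bigcap_{\langle T,\tau\rangle\in\mathcal{T}^{w}}\{\mathrm{min}\,\mathcal{O}^{w}_{\tau}\}$ and $\bigcup\mathcal{N}^{t}_{w}=\bigcap\mathcal{T}^{w}$, and observe first the basic inclusion $\bigcap\mathcal{N}^{t}_{w}\subseteq\bigcup\mathcal{N}^{t}_{w}$: every $\mathrm{min}\,\mathcal{O}^{w}_{\tau}$ is a subset of its own $T$, so the intersection over $\langle T,\tau\rangle\in\mathcal{T}^{w}$ of the former sits inside the intersection of the latter. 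This inclusion is what makes clause (c) of Def.~\ref{minmax} a sensible definition of $\mathcal{N}^{t}_{w}$ and will be used repeatedly.

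\textbf{Easy clauses.} Conditions (a), (b) and (d) are almost immediate. For (a), $w\in\mathrm{min}\,\mathcal{O}^{w}_{\tau}$ for every relevant $\tau$ (each $w$ lies in its own minimal neighborhood), hence $w\in\bigcap\mathcal{N}^{t}_{w}$; since also $\bigcap\mathcal{N}^{t}_{w}\subseteq\bigcup\mathcal{N}^{t}_{w}$, $w$ is in every member of $\mathcal{N}^{t}_{w}$, i.e.\ $w\in\bigcap\mathcal{N}^{t}_{w}$ as an element of $W$ lying in the intersection of the family --- and one checks that $\bigcap$ of the family $\mathcal{N}^{t}_{w}$ equals $\bigcap\mathcal{N}^{t}_{w}$ because $\bigcap\mathcal{N}^{t}_{w}$ is itself the $\subseteq$-least member. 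That least-member observation is exactly (b): by clause (c) of Def.~\ref{minmax}, $X=\bigcap\mathcal{N}^{t}_{w}$ trivially satisfies $\bigcap\mathcal{N}^{t}_{w}\subseteq X\subseteq\bigcup\mathcal{N}^{t}_{w}$, so $\bigcap\mathcal{N}^{t}_{w}\in\mathcal{N}^{t}_{w}$. The relativized superset axiom (d) is likewise built into clause (c): if $\bigcap\mathcal{N}^{t}_{w}\subseteq X$ and $X\subseteq\bigcup\mathcal{N}^{t}_{w}$ then $X\in\mathcal{N}^{t}_{w}$ by definition, and $X\subseteq\bigcup\mathcal{N}^{t}_{w}$ is precisely ``$X\subseteq\bigcup\mathcal{N}^{t}_{w}$'', matching the axiom's hypothesis $X\subseteq\bigcup\mathcal{N}_{w}$ once one notes $\bigcup\mathcal{N}^{t}_{w}$ is (the union of) $\mathcal{N}^{t}_{w}$.

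\textbf{The monotonicity clauses.} The substantive work is in (c), (e) and (f). For the $\rightarrow$-condition (c): let $u\in\bigcap\mathcal{N}^{t}_{w}$. I must show $\bigcap\mathcal{N}^{t}_{u}\subseteq\bigcap\mathcal{N}^{t}_{w}$. The key point is that $u\in\bigcap\mathcal{N}^{t}_{w}$ means $u\in\mathrm{min}\,\mathcal{O}^{w}_{\tau}$ for \emph{every} $\langle T,\tau\rangle\in\mathcal{T}^{w}$; in particular $u\in T$, so $\mathcal{T}^{w}\subseteq\mathcal{T}^{u}$, and moreover $u\in\mathrm{min}\,\mathcal{O}^{w}_{\tau}$ forces $\mathrm{min}\,\mathcal{O}^{u}_{\tau}\subseteq\mathrm{min}\,\mathcal{O}^{w}_{\tau}$ (the minimal $\tau$-open set around $u$ is contained in any $\tau$-open set containing $u$). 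Hence for each $\tau$ indexing $\mathcal{T}^{w}$ we get $\mathrm{min}\,\mathcal{O}^{u}_{\tau}\subseteq\mathrm{min}\,\mathcal{O}^{w}_{\tau}$, and intersecting over $\mathcal{T}^{w}$ --- a subfamily of $\mathcal{T}^{u}$, so $\bigcap\mathcal{N}^{t}_{u}$ is already contained in the $\mathcal{T}^{w}$-indexed intersection of the $\mathrm{min}\,\mathcal{O}^{u}_{\tau}$ --- yields $\bigcap\mathcal{N}^{t}_{u}\subseteq\bigcap\mathcal{N}^{t}_{w}$. The $\Box$-condition (e) is the analogue for maximal neighborhoods: from $u\in\bigcap\mathcal{N}^{t}_{w}$ we again have $\mathcal{T}^{w}\subseteq\mathcal{T}^{u}$, hence $\bigcup\mathcal{N}^{t}_{u}=\bigcap\mathcal{T}^{u}\subseteq\bigcap\mathcal{T}^{w}=\bigcup\mathcal{N}^{t}_{w}$. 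For the $T$-condition (f): let $v\in\bigcup\mathcal{N}^{t}_{w}=\bigcap\mathcal{T}^{w}$, so $v\in T$ for every $\langle T,\tau\rangle\in\mathcal{T}^{w}$, i.e.\ $\mathcal{T}^{w}\subseteq\mathcal{T}^{v}$; then for each such $\tau$, $\mathrm{min}\,\mathcal{O}^{v}_{\tau}\subseteq T$, so $\bigcap\mathcal{N}^{t}_{v}$, being contained in the $\mathcal{T}^{w}$-indexed intersection of the $\mathrm{min}\,\mathcal{O}^{v}_{\tau}$, lies inside $\bigcap_{\langle T,\tau\rangle\in\mathcal{T}^{w}}T=\bigcup\mathcal{N}^{t}_{w}$.

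\textbf{Main obstacle.} I expect the only real care to be needed in (c): one must not confuse ``$u$ belongs to the minimal $\tau$-open neighborhood of $w$'' with ``$w$ and $u$ have the same minimal neighborhood'' --- the former gives $\mathrm{min}\,\mathcal{O}^{u}_{\tau}\subseteq\mathrm{min}\,\mathcal{O}^{w}_{\tau}$ but not equality, and one has to check that the index sets line up correctly when $u$ may lie in spaces $T$ not containing $w$ (these extra spaces only shrink $\bigcap\mathcal{N}^{t}_{u}$ further, so the inclusion survives). The corresponding bookkeeping with $\mathcal{T}^{w}\subseteq\mathcal{T}^{u}$ versus $\mathcal{T}^{w}\subseteq\mathcal{T}^{v}$ in (e) and (f) is routine once (c) is done cleanly.
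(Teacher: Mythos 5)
Your proof is correct and follows essentially the same route as the paper: the same observation $\mathcal{T}^{w} \subseteq \mathcal{T}^{u}$ (resp. $\mathcal{T}^{w} \subseteq \mathcal{T}^{v}$) drives conditions (c), (e) and (f), and your direct appeal to the minimality of $\text{min}\,\mathcal{O}^{u}_{\tau}$ among $\tau$-open sets containing $u$ is just a cleaner phrasing of the paper's argument by contradiction for the inclusion $\text{min}\,\mathcal{O}^{u}_{\tau} \subseteq \text{min}\,\mathcal{O}^{w}_{\tau}$. The only (harmless) addition is that you also verify the relativized superset axiom explicitly, which the paper omits as immediate from clause (3) of Def.~\ref{minmax}.
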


\begin{proof}
We must check five conditions:

\begin{enumerate}

\item $w \in \bigcap \mathcal{N}^{t}_{w}$. This is simple because $\bigcap \mathcal{N}^{t}_{w}$ is defined as an intersection of all $\tau$-open $w$-neighborhoods (for every $\tau$) and certainly $w$ is in each such neighborhood.

\item $\bigcap \mathcal{N}^{t}_{w} \in \mathcal{N}^{t}_{w}$. This is obvious by the very definition of $\mathcal{N}^{t}_{w}$.

\item $v \in \bigcap \mathcal{N}^{t}_{w} \Rightarrow \bigcap \mathcal{N}^{t}_{v} \subseteq \bigcap \mathcal{N}^{t}_{w}$. Let us note two facts. First, $v$ is at least in all those spaces, in which $w$ is (because it is in the intersection of all minimal $w$-neighborhoods). Thus, we can say that $\bigcap \mathcal{N}^{t}_{v} = \bigcap_{\langle T, \tau \rangle \in \mathcal{T}^{v}} \{\text{min} \mathcal{O}^{v}_{\tau}\} \subseteq \bigcap_{\langle T, \tau \rangle \in \mathcal{T}^{w}} \{\text{min} \mathcal{O}^{v}_{\tau}\}$.

    Second, suppose for a moment that we work with one particular Alexandroff topological space $\rho$. Assume that $v$ belongs to the minimal $\rho$-open neighborhood of $w$. Of course $v$ has its own minimal $\rho$-open neighborhood - but let us suppose that $\text{min} \mathcal{O}^{v}_{\rho} \nsubseteq \text{min} \mathcal{O}^{w}_{\rho}$. Now - from the basic properties of topology and the fact that at least $v$ belongs to $\text{min} \mathcal{O}^{w}_{\rho}$ - we state that $\text{min} \mathcal{O}^{v}_{\rho} \cap \text{min} \mathcal{O}^{w}_{\rho}$ is $\rho$-open. Of course, this intersection is contained in $\text{min} \mathcal{O}^{w}_{\rho}$. Thus, we have contradiction with the assumption that minimal $\rho$-open $v$-neighborhood is not contained in $\text{min} \mathcal{O}^{w}_{\rho}$.

 Now let us go back to the main part of the proof. The second fact allows us to say that $\bigcap_{\langle T, \tau \rangle \in \mathcal{T}^{w}} \{\text{min} \mathcal{O}^{v}_{\tau}\} \subseteq \bigcap_{\langle T, \tau \rangle \in \mathcal{T}^{w}} \{\text{min} \mathcal{O}^{w}_{\tau}\} = \bigcap \mathcal{N}^{t}_{w}$.

\item $v \in \bigcap \mathcal{N}^{t}_{w} \Rightarrow \bigcup \mathcal{N}^{t}_{v} \subseteq \bigcup \mathcal{N}^{t}_{w}$. As earlier, we say that $v$ is at least in each space which belongs to $\mathcal{T}^{w}$. Thus $\bigcup \mathcal{N}^{t}_{v} = \bigcap \mathcal{T}^{v} = \bigcap \{ \langle T, \tau \rangle \in \mathfrak{U}; v \in T \} \subseteq \bigcap \{ \langle T, \tau \rangle \in \mathfrak{U}; w \in T \} = \bigcup \mathcal{N}^{t}_{w}$.

\item $v \in \bigcup \mathcal{N}^{t}_{w} \Rightarrow \bigcap \mathcal{N}^{t}_{v} \subseteq \bigcup \mathcal{N}^{t}_{w}$. Suppose that $v \in \bigcup \mathcal{N}^{t}_{w}$ defined as in Def. \ref{minmax}. Thus $v \in \bigcap \mathcal{T}^{w}$ which means in particular that $v$ is in all those universes, in which $w$ is. Now it is clear that $\bigcap \mathcal{N}^{t}_{v}$ - defined as an intersection of all $\tau$-open minimal $v$-neighborhoods - must be contained at least in each element of $\mathcal{T}^{w}$, i.e. in $\bigcup \mathcal{N}^{t}_{w}$.

\end{enumerate}

\end{proof}

\begin{figure}[ht]
\centering
\includegraphics[height=4cm]{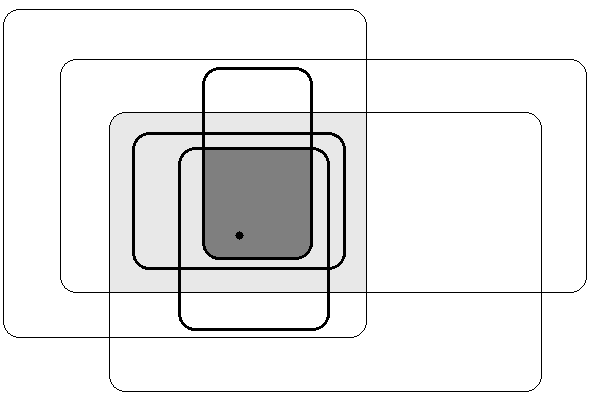}
\caption{Maximal and minimal neighborhoods in multi-topological space.}
\label{fig:obrazek {mtop3}}
\end{figure}

We have transformed our initial multi-topological structure into the neighborhood frame. Note that it is possible that for each $\tau$ the set $\bigcap \mathcal{N}^{t}_{w}$ is \textit{not} $\tau$-open. We do not expect this. It is just intersection of all minimal $w$-neighborhoods. Now we shall introduce valuation and rules of forcing - thus obtaining logical model.

\begin{df}

Assume that we have \multitopp-frame $\langle W, \mathfrak{W} \rangle$. Suppose that for each $w \in W$ we defined $\mathcal{N}^{t}_{w}$ as in Def. \ref{minmax}. We define valuation $V_{t}$ as a function from $PV$ into $P(W)$ satisfying the following condition: if $w \in V_{t}(q)$ then $\bigcap \mathcal{N}^{t}_{w} \subseteq V_{t}(q)$. The whole triple $\langle W, \mathfrak{W}, V_{t} \rangle$ is called \multitopp-model.

\end{df}

\begin{df}
For every \multitopp-model $M_{t} = \langle W, \mathfrak{W}, V_{t} \rangle$, valuation of formulas is defined as such:

\begin{enumerate}

\item $V_{t}(\varphi \land \psi) = V_{t}(\varphi) \cap V_{t}(\psi)$

\item $V_{t}(\varphi \lor \psi) = V_{t}(\varphi) \cup V_{t}(\psi)$

\item $V_{t}(\varphi \rightarrow \psi) = \bigcup_{w \in \mathscr{Imp}} \{\bigcap \mathcal{N}^{t}_{w}\}$ where $\mathscr{Imp} = \{w \in W; \bigcap \mathcal{N}^{t}_{w} \subseteq -V_{t}(\varphi) \cup V_{t}(\psi)\}$

\item $V_{t}(\Box \varphi) = \bigcup_{w \in \mathscr{Mod}} \{\bigcap \mathcal{N}^{t}_{w}\}$ where $\mathscr{Mod} = \{w \in W; \bigcup \mathcal{N}^{t}_{w} \subseteq V_{t}(\varphi)\}$
9
\end{enumerate}

\end{df}

We say that formula $\varphi$ is true \textit{iff} in each \multitopp-model $M_{t} = \langle W, \mathfrak{W}, V_{t} \rangle$ we have $V_{t}(\varphi) = W$.

The next theorem is crucial for our considerations.

\begin{tw}
For each \multitopp-model $M_{t} = \langle W, \mathfrak{W}, V_{t} \rangle$ there exists \nima-model $M_{\mathcal{N}} = \langle W, \mathcal{N}, V_{\mathcal{N}} \rangle$ which is \textit{pointwise equivalent} to $M_{t}$, i.e. $w \Vdash \varphi \Leftrightarrow w \in V_{t} (\varphi)$.
\end{tw}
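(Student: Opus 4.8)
The plan is to mimic the construction used in the previous transformation theorem, but in the opposite direction: given a \multitopp-model $M_{t} = \langle W, \mathfrak{W}, V_{t} \rangle$, we already extracted from it a neighborhood family $\mathcal{N}^{t}_{w}$ satisfying all the \nima-frame conditions (previous theorem). So I would simply define $M_{\mathcal{N}} = \langle W, \mathcal{N}, V_{\mathcal{N}} \rangle$ by putting $\mathcal{N}_{w} = \mathcal{N}^{t}_{w}$ for every $w \in W$ and $V_{\mathcal{N}}(q) = V_{t}(q)$ for every $q \in PV$. The hereditariness condition required of a \nima-model valuation, namely $w \in V_{\mathcal{N}}(q) \Rightarrow \bigcap \mathcal{N}_{w} \subseteq V_{\mathcal{N}}(q)$, is exactly the condition imposed on $V_{t}$ in the definition of a \multitopp-model, so the frame-plus-valuation really is an \nima-model with nothing extra to verify. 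Then pointwise equivalency $w \Vdash \varphi \Leftrightarrow w \in V_{t}(\varphi)$ is proved by induction on the complexity of $\varphi$.

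The base case $\varphi = q$ is immediate since $V_{\mathcal{N}}(q) = V_{t}(q)$ and $w \Vdash q \Leftrightarrow w \in V_{\mathcal{N}}(q)$ by definition; the case $\varphi = \bot$ is trivial. For $\varphi = \psi_1 \land \psi_2$ and $\varphi = \psi_1 \lor \psi_2$ one uses the inductive hypothesis together with the clauses $V_{t}(\psi_1 \land \psi_2) = V_{t}(\psi_1) \cap V_{t}(\psi_2)$ and $V_{t}(\psi_1 \lor \psi_2) = V_{t}(\psi_1) \cup V_{t}(\psi_2)$, which match the forcing clauses for $\land$ and $\lor$ exactly. The two substantive cases are $\rightarrow$ and $\Box$, and here the point is that the semantic clauses for $V_{t}$ in the \multitopp-setting were deliberately written using $\bigcap \mathcal{N}^{t}_{w}$ and $\bigcup \mathcal{N}^{t}_{w}$ so as to line up with neighborhood forcing. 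Concretely: for $\Box$, note that $w \Vdash \Box\varphi$ iff $\bigcup \mathcal{N}_{w} \subseteq \{v; v \Vdash \varphi\}$, which by the inductive hypothesis is $\bigcup \mathcal{N}^{t}_{w} \subseteq V_{t}(\varphi)$, i.e. $w \in \mathscr{Mod}$; and since $w \in \bigcap \mathcal{N}^{t}_{w}$ always, membership $w \in \mathscr{Mod}$ gives $w \in \bigcup_{v \in \mathscr{Mod}}\{\bigcap \mathcal{N}^{t}_{v}\} = V_{t}(\Box\varphi)$. Conversely, if $w \in V_{t}(\Box\varphi)$ then $w \in \bigcap \mathcal{N}^{t}_{v}$ for some $v \in \mathscr{Mod}$; then $v \Vdash \Box\varphi$ by the inductive hypothesis and the reasoning just given, and monotonicity of intuitionistic forcing along $\bigcap \mathcal{N}^{t}_{v}$ (which is valid because of the $\rightarrow$-condition and $\Box$-condition transferring upward) yields $w \Vdash \Box\varphi$. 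The $\rightarrow$ case is handled symmetrically: $w \Vdash \varphi \rightarrow \psi$ iff $\bigcap \mathcal{N}_{w} \subseteq -V_{t}(\varphi) \cup V_{t}(\psi)$ (using the inductive hypothesis to rewrite $\{v; v \nVdash \varphi \text{ or } v \Vdash \psi\}$), i.e. $w \in \mathscr{Imp}$, hence $w \in V_{t}(\varphi \rightarrow \psi)$; and the converse again uses $w \in \bigcap \mathcal{N}^{t}_{v}$ for some $v \in \mathscr{Imp}$ plus the $\rightarrow$-condition $\bigcap \mathcal{N}^{t}_{v} \subseteq \bigcap \mathcal{N}^{t}_{w}'$... more precisely, $\bigcap \mathcal{N}^{t}_{w} \subseteq \bigcap \mathcal{N}^{t}_{v}$ whenever $w \in \bigcap \mathcal{N}^{t}_{v}$? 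One must be slightly careful with the direction of the hereditariness here; what is actually needed is that for $w$ in $\bigcap \mathcal{N}^{t}_{v}$ one has $\bigcap \mathcal{N}^{t}_{w} \subseteq \bigcap \mathcal{N}^{t}_{v}$, which is precisely the $\rightarrow$-condition established in the previous theorem.

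The main obstacle, and the step I would spend the most care on, is verifying that the monotonicity property of intuitionistic forcing — namely, if $w \Vdash \chi$ and $u \in \bigcap \mathcal{N}_{w}$ then $u \Vdash \chi$ — indeed holds in the resulting \nima-model, since it is this fact that drives the ``$\Leftarrow$'' directions of both the $\Box$ and $\rightarrow$ cases. This is standard for \nima-models once the frame conditions are in place, but because our $\bigcap \mathcal{N}^{t}_{w}$ need \emph{not} be open in any of the component topologies (a point the authors explicitly flag), one cannot appeal to topological intuition here and must instead derive monotonicity purely from the $\rightarrow$-condition and the $\Box$-condition — exactly the two properties shown to hold for $\mathcal{N}^{t}_{w}$ in the preceding theorem. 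So the proof reduces to: (1) cite the preceding theorem to get that $\langle W, \mathcal{N}^{t} \rangle$ is an \nima-frame; (2) observe the valuation condition transfers verbatim; (3) run the induction, where the only non-formal points are the two ``$\Leftarrow$'' arguments, both of which are closed by combining the inductive hypothesis with monotonicity of forcing along minimal neighborhoods.
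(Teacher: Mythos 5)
Your proposal is correct and follows essentially the same route as the paper: take $\mathcal{N}_{w} = \mathcal{N}^{t}_{w}$ and $V_{\mathcal{N}} = V_{t}$, invoke the preceding theorem for the \nima-frame conditions, and run the induction where the $\Rightarrow$ directions place $w$ in $\mathscr{Imp}$ or $\mathscr{Mod}$ and the $\Leftarrow$ directions are closed by monotonicity of forcing along $\bigcap \mathcal{N}^{t}_{x}$ (via the $\rightarrow$- and $\Box$-conditions). If anything, your explicit care about where monotonicity comes from is slightly tidier than the paper's own wording of the $\Leftarrow$ case for implication.
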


\begin{proof}

Let us take $M_{t}$ and introduce $\mathcal{N}^{t}_{w}$ for each $w \in W$ just like in Def. \ref{minmax}. We define $V_{\mathcal{N}}: PV \rightarrow P(W)$ in the following way: $V_{\mathcal{N}} = V_{t}$. Now the structure $M_{\mathcal{N}} = \langle W, \mathcal{N}^{t}, V_{\mathcal{N}} \rangle$ is a proper neighborhood model. In fact, we have already shown that it is \nima-frame. By the definition of $V_{t}$ we know that it is monotone in \nima-frame. Let us check pointwise equivalency between both structures.

\item $\rightarrow$

($\Rightarrow$)
Suppose that $w \Vdash \varphi \rightarrow \psi$. Thus $\bigcap \mathcal{N}^{t}_{w} \subseteq \{v \in W; v \nVdash \varphi$ or $v \Vdash \psi\} = -V_{\mathcal{N}}(\varphi) \cup V_{\mathcal{N}}(\psi)$. By induction this last set can be written as $-V_{t}(\varphi) \cup V_{t}(\psi)$. Thus, we can say that $w$ belongs to $\mathscr{Imp}$ defined as in Def. \ref{minmax}. Of course $w \in \bigcap \mathcal{N}^{t}_{w}$. Hence, $w \in V_{t}(\varphi \rightarrow \psi)$.

($\Leftarrow$)
Assume that $w \in V_{t}(\varphi \rightarrow \psi)$. This means that there is at least one point $x \in \mathscr{Imp}$ such that $w \in \bigcap \mathcal{N}^{t}_{w}$. But if $\bigcap \mathcal{N}_{x} \subseteq -V_{t}(\varphi) \cup V_{t}(\psi)$ then we can say that $\bigcap \mathcal{N}_{x} \subseteq -V_{\mathcal{N}}(\varphi) \cup V_{\mathcal{N}} (\psi)$ (by induction). In particular, $w \in -V_{\mathcal{N}}(\varphi) \cup V_{\mathcal{N}} (\psi)$. Thus, in \nima-setting, we have $w \Vdash \varphi \rightarrow \psi$.

\item $\Box$

($\Rightarrow$)
Suppose that $w \Vdash \Box \varphi$. Thus $\bigcup \mathcal{N}_{w} \subseteq V_{\mathcal{N}}(\varphi) = V_{t}(\varphi)$. The last equivalence is a result of induction hypothesis. Now we see that $w \in \mathscr{Mod}$. Of course $w \in \bigcap \mathcal{N}^{t}_{w}$. Then $w \in V_{t}(\Box \varphi)$.

($\Leftarrow$)
Assume that $w \in V_{t}(\Box \varphi)$. Hence, there is at least one world $x \in \mathscr{Mod}$ such that $w \in \bigcap \mathcal{N}^{t}_{x}$. But if $\bigcup \mathcal{N}_{x} \subseteq V_{t}(\varphi)$, then by induction $\bigcup \mathcal{N}_{x} \subseteq V_{\mathcal{N}}(\varphi)$. This means that $x \Vdash \Box \varphi$. By monotonicity of forcing in $\bigcap \mathcal{N}^{t}_{x}$ we can say that $w \Vdash \Box \varphi$.

\end{proof}

\section{Alternative approach}
\label{alt}

Let us define topology in a slightly different way than in Def. \ref{wopen}. Now we shall not work with distinguished sets. On the other hand, we must pay for it by assuming that $\bigcap \mathcal{N}_{w}$ is always contained in each $w$-open set.

\begin{df}

Suppose that we have \nima-frame $M_{\mathcal{N}} = \langle W, \mathcal{N} \rangle$. We say that is $w_{\min}$-open in \nima-structure \textit{iff} $X \subseteq \bigcup \mathcal{N}_{w}$, $\bigcap \mathcal{N}_{w} \subseteq X$ and for every $v \in X$ we have $\bigcap \mathcal{N}_{v} \subseteq X$. We denote $\mathcal{Q}_{w} = \{X \subseteq W; X $ are $w_{\min}$-open $\}$ and call it $w_{\min}$-topology.

\end{df}

\begin{tw}
Assume that we have \nima-frame $F_{\mathcal{N}} = \langle W, \mathcal{N}\rangle$. Then $\langle \bigcup \mathcal{N}_{w}, \mathcal{Q}_{w} \rangle$ is a topological space for every $w \in W$.
\end{tw}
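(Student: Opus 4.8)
The plan is to verify the four defining clauses of a topology for $\mathcal{Q}_w$ on the carrier $\bigcup\mathcal{N}_w$, and to do so by piggy-backing on the proof of Theorem~\ref{topol}. The key observation I would make at the outset is that $\mathcal{Q}_w$ is obtained from $\mathcal{O}_w$ simply by keeping those $w$-open sets that in addition contain $\bigcap\mathcal{N}_w$; in symbols, $\mathcal{Q}_w=\{X\in\mathcal{O}_w;\ \bigcap\mathcal{N}_w\subseteq X\}$, since $w$-openness is clauses (i) and (iii) of the new definition while $w_{\min}$-openness adds clause (ii). Because Theorem~\ref{topol} already tells us that $\langle\bigcup\mathcal{N}_w,\mathcal{O}_w\rangle$ is a topological space (in fact an Alexandroff one), it suffices to check that the extra requirement $\bigcap\mathcal{N}_w\subseteq X$ is satisfied by the whole space and is stable under the intersections and unions occurring in the topology axioms.

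First I would treat the carrier itself: $\bigcup\mathcal{N}_w\in\mathcal{O}_w$ by the $T$-condition exactly as in Theorem~\ref{topol}, and $\bigcap\mathcal{N}_w\subseteq\bigcup\mathcal{N}_w$ holds trivially since $\bigcap\mathcal{N}_w\in\mathcal{N}_w$; hence $\bigcup\mathcal{N}_w\in\mathcal{Q}_w$. For an arbitrary subfamily $\mathscr{X}\subseteq\mathcal{Q}_w$, both $\bigcap\mathscr{X}$ and $\bigcup\mathscr{X}$ are again $w$-open by Theorem~\ref{topol}; moreover every member of $\mathscr{X}$ contains $\bigcap\mathcal{N}_w$, so $\bigcap\mathscr{X}$ does as well, and, provided $\mathscr{X}\neq\emptyset$, so does $\bigcup\mathscr{X}$. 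Thus $\mathcal{Q}_w$ contains the carrier and is closed under arbitrary intersections and under nonempty unions. It is also worth recording that the least member of $\mathcal{Q}_w$ is $\bigcap\mathcal{N}_w$ itself, which lies in $\mathcal{Q}_w$ precisely because the $\rightarrow$-condition of Def.~\ref{nimdef} gives $v\in\bigcap\mathcal{N}_w\Rightarrow\bigcap\mathcal{N}_v\subseteq\bigcap\mathcal{N}_w$, so $\bigcap\mathcal{N}_w$ is $w$-open and of course contains itself.

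The only point that genuinely needs care — and the one I expect to be the main obstacle — is the empty set: clause (ii) of $w_{\min}$-openness cannot be met by $\emptyset$, since $\bigcap\mathcal{N}_w$ is nonempty (it contains $w$), so $\emptyset\notin\mathcal{Q}_w$ on a literal reading. I would dispose of this in the same spirit in which the relativized superset axiom of Def.~\ref{nimdef} is used throughout the paper: here ``topology'' is understood in the relativized sense natural to these structures, the role of the least open set being played not by $\emptyset$ but by the minimal neighborhood $\bigcap\mathcal{N}_w$; equivalently, one simply adjoins $\emptyset$ to $\mathcal{Q}_w$, which affects nothing in the later constructions since $\bigcap\mathcal{N}_w$ is forced to be open in any case. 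With that convention the verification above is complete, giving that $\langle\bigcup\mathcal{N}_w,\mathcal{Q}_w\rangle$ is a topological space for every $w\in W$.
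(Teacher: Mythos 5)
Your proposal is correct, and in fact it supplies the verification that the paper omits entirely: the paper's own ``proof'' consists of the single remark that the conditions can be checked ``just as in Th.~\ref{topol}'' and leaves all details to the reader. Your reduction is exactly the intended one --- observing that $\mathcal{Q}_{w}=\{X\in\mathcal{O}_{w};\ \bigcap\mathcal{N}_{w}\subseteq X\}$ and then checking that the extra containment requirement survives the carrier, arbitrary intersections, and nonempty unions --- and each of those checks is sound. The one place where you go beyond the paper is the empty set: you are right that $\emptyset\notin\mathcal{Q}_{w}$ on a literal reading, since $w\in\bigcap\mathcal{N}_{w}$ forces $\bigcap\mathcal{N}_{w}\neq\emptyset$, so $\mathcal{Q}_{w}$ is not closed under the empty union and the theorem as stated is false without a convention. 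The paper never acknowledges this; your two proposed repairs (adjoin $\emptyset$, or read ``topology'' with $\bigcap\mathcal{N}_{w}$ playing the role of the least open set) are both harmless for everything done later, since the subsequent constructions only ever use the nonempty $\mathcal{Q}_{x}$-open sets, and in particular only the minimal one $\bigcap\mathcal{N}_{x}$. So your write-up is not merely a proof of the statement but a small correction to it, and I would keep the empty-set remark explicit rather than sweeping it into ``easy to check.''
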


\begin{proof}
It is easy to check conditions of well-defined topology - just as in Th. \ref{topol}. We leave details to the reader.
\end{proof}

\begin{figure}[ht]
\centering
\includegraphics[height=5cm]{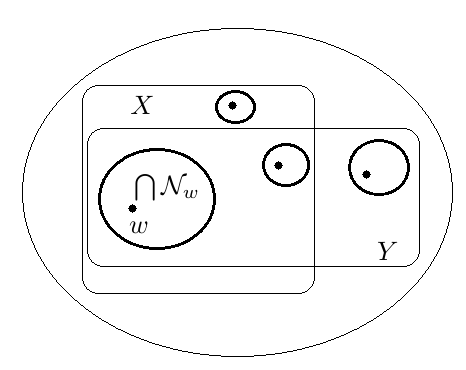}
\caption{Topology ${Q}_{w}$. X, Y are $w$-open.}
\label{fig:obrazek {mtop4}}
\end{figure}

\subsection{From neighborhood frames to multi-topological structures once again}
\label{top2}

Let us introduce the new type of multi-topological structures. In fact, they are \multitopp-frames but with valuation defined in a different way. Recall that $\mathcal{O}^{w}_{\tau}$ denotes the family of all $\tau$-open $w$-neighborhoods and $\text{min} \mathcal{O}^{w}_{\tau}$ is an intersection of such family.

\begin{df}
\label{multitop2}

\multitoppp-model is an ordered triple $M_{t} = \langle W, \mathfrak{W}, V \rangle$ where $\langle W, \mathfrak{W} \rangle$ is a \multitopp-frame and $V_{t}$ is a function from $PV$ into $P(W)$ satisfying the following condition: $V_{t}(q) = \bigcup \mathcal{X}$ where $\mathcal{X} \subseteq \{X \subseteq W$; there is $\langle T, \tau \rangle \in \mathfrak{W}$ and $w \in T$ such that $X = \text{min} \mathcal{O}^{w}_{\tau}\}$.

\end{df}

\begin{df}
\label{defin}
For every \multitoppp-model $M_{t} = \langle W, \mathfrak{W}, V_{t} \rangle$, valuation of formulas is defined as such:

\begin{enumerate}

\item $V_{t}(\varphi \land \psi) = V_{t}(\varphi) \cap V_{t}(\psi)$

\item $V_{t}(\varphi \lor \psi) = V_{t}(\varphi) \cup V_{t}(\psi)$

\item $V_{t}(\varphi \rightarrow \psi) = \bigcup \mathcal{X}$, where $\mathcal{X} = \{X \subseteq W$ such that $X \subseteq -V(\varphi) \cup V(\psi)$ and there are $\langle T, \tau \rangle \in \mathfrak{W}, w \in T$ such that $X = \text{min} \mathcal{O}^{w}_{\tau}\}$.

\item $V(\Box \varphi) = \bigcup \mathcal{X}$, where $\mathcal{X} = \{X \subseteq W$ such that there are $\langle T, \tau \rangle \in \mathfrak{W}, w \in T$ for which $X = \text{min} \mathcal{O}^{w}_{\tau}$ and $T \subseteq V(\varphi)\}$.

\end{enumerate}

\end{df}

We say that formula $\varphi$ is true \textit{iff} in each \multitoppp-model $M_{t} = \langle W, \mathfrak{W}, V_{t} \rangle$ we have $V_{t}(\varphi) = W$.

One can see that in some sense we composed earlier definitions of multi-topological frames, valuations and models. Now our situation is similar to that from section \ref{mtf}. The main difference is that we can work with minimal $\tau$-open sets, i.e. with $\text{min} \mathcal{O}^{w}_{\tau}$.

\begin{tw}
For each \nima-model $M_{\mathcal{N}} = \langle W, \mathcal{N}, V_{\mathcal{N}} \rangle$ there exists \multitoppp-model $M_{t} = \langle W, \mathfrak{W}, V_{t} \rangle$ which is \textit{pointwise equivalent} to $M_{\mathcal{N}}$, i.e. $w \Vdash \varphi \Leftrightarrow w \in V_{t} (\varphi)$.
\end{tw}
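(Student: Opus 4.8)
The plan is to proceed in close analogy with the earlier transformations from neighbourhood frames to multi-topological structures, the $w_{\min}$-topologies $\mathcal{Q}_{w}$ of Section~\ref{alt} now playing the role both of the topologies and of the distinguished sets. Given $M_{\mathcal{N}} = \langle W, \mathcal{N}, V_{\mathcal{N}} \rangle$, I would set $\mathfrak{W} = \{ \langle \bigcup \mathcal{N}_{w}, \mathcal{Q}_{w} \rangle ; w \in W \}$ and $V_{t}(q) = V_{\mathcal{N}}(q)$ for every $q \in PV$, and first check that this is a legitimate \multitoppp-model. Each $\langle \bigcup \mathcal{N}_{w}, \mathcal{Q}_{w} \rangle$ is a topological space (Section~\ref{alt}), and it is Alexandroff because an arbitrary intersection of $w_{\min}$-open sets is again $w_{\min}$-open; also $W = \bigcup \mathcal{T}$ since $w \in \bigcap \mathcal{N}_{w} \subseteq \bigcup \mathcal{N}_{w}$. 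For the admissibility of $V_{t}$ one uses that, inside its own space, the minimal $\mathcal{Q}_{w}$-open neighbourhood of the centre $w$ equals $\bigcap \mathcal{N}_{w}$; monotonicity of $V_{\mathcal{N}}$ then gives $V_{\mathcal{N}}(q) = \bigcup_{w \in V_{\mathcal{N}}(q)} \bigcap \mathcal{N}_{w}$, which is of the form required by Def.~\ref{multitop2}.

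Next I would prove $w \Vdash \varphi \Leftrightarrow w \in V_{t}(\varphi)$ by induction on $\varphi$, following Def.~\ref{defin}. The atomic case is immediate and $\land, \lor$ are routine. For $\varphi \rightarrow \psi$ the pivotal fact is again $\bigcap \mathcal{N}_{w} = \text{min}\,\mathcal{O}^{w}_{\mathcal{Q}_{w}}$: if $w \Vdash \varphi \rightarrow \psi$ then $\bigcap \mathcal{N}_{w} \subseteq \{ v ; v \nVdash \varphi \text{ or } v \Vdash \psi \} = -V_{t}(\varphi) \cup V_{t}(\psi)$ by the inductive hypothesis, so $\bigcap \mathcal{N}_{w}$ is one of the sets whose union is $V_{t}(\varphi \rightarrow \psi)$ and $w$ lies in it; conversely, if $w \in V_{t}(\varphi \rightarrow \psi)$, any witness $\text{min}\,\mathcal{O}^{v}_{\mathcal{Q}_{x}} \ni w$ (with $\langle \bigcup \mathcal{N}_{x}, \mathcal{Q}_{x} \rangle \in \mathfrak{W}$) contained in $-V_{t}(\varphi) \cup V_{t}(\psi)$ is $x_{\min}$-open, so its closure clause gives $\bigcap \mathcal{N}_{w} \subseteq \text{min}\,\mathcal{O}^{v}_{\mathcal{Q}_{x}} \subseteq -V_{t}(\varphi) \cup V_{t}(\psi)$, whence $w \Vdash \varphi \rightarrow \psi$.

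The case $\Box \varphi$ carries the real content, and I expect the $(\Leftarrow)$ direction to be the main obstacle. The $(\Rightarrow)$ direction is smooth: if $w \Vdash \Box \varphi$ then $\bigcup \mathcal{N}_{w} \subseteq V_{\mathcal{N}}(\varphi) = V_{t}(\varphi)$, so the space $\langle \bigcup \mathcal{N}_{w}, \mathcal{Q}_{w} \rangle$ has universe inside $V_{t}(\varphi)$ and contributes $\text{min}\,\mathcal{O}^{w}_{\mathcal{Q}_{w}} = \bigcap \mathcal{N}_{w} \ni w$ to $V_{t}(\Box \varphi)$. For $(\Leftarrow)$, if $w \in V_{t}(\Box \varphi)$ there are a space $\langle \bigcup \mathcal{N}_{x}, \mathcal{Q}_{x} \rangle$ with $\bigcup \mathcal{N}_{x} \subseteq V_{\mathcal{N}}(\varphi)$ and a point $v \in \bigcup \mathcal{N}_{x}$ with $w \in \text{min}\,\mathcal{O}^{v}_{\mathcal{Q}_{x}}$; using the $T$-condition and the $\rightarrow$-condition one checks $\text{min}\,\mathcal{O}^{v}_{\mathcal{Q}_{x}} = \bigcap \mathcal{N}_{x} \cup \bigcap \mathcal{N}_{v}$. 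If $w \in \bigcap \mathcal{N}_{x}$ (in particular if $v = x$), the $\Box$-condition gives $\bigcup \mathcal{N}_{w} \subseteq \bigcup \mathcal{N}_{x} \subseteq V_{\mathcal{N}}(\varphi)$, so $w \Vdash \Box \varphi$ by the inductive hypothesis.

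The genuinely delicate situation is $w \in \bigcap \mathcal{N}_{v}$ with $v \in \bigcup \mathcal{N}_{x} \setminus \bigcap \mathcal{N}_{x}$: here the $\Box$-condition only yields $\bigcup \mathcal{N}_{w} \subseteq \bigcup \mathcal{N}_{v}$, and since $v$ sits merely in the maximal neighbourhood of $x$, nothing in Def.~\ref{nimdef} forces $\bigcup \mathcal{N}_{v} \subseteq \bigcup \mathcal{N}_{x}$, so $\bigcup \mathcal{N}_{w} \subseteq V_{\mathcal{N}}(\varphi)$ need not follow; this is exactly where care is required. I would close this step either by restricting attention to \nima-frames additionally satisfying $v \in \bigcup \mathcal{N}_{w} \Rightarrow \bigcup \mathcal{N}_{v} \subseteq \bigcup \mathcal{N}_{w}$ (a mild strengthening of the $\Box$-condition, under which the residual case evaporates and the proof goes through verbatim), or by refining $\mathfrak{W}$ so that every minimal neighbourhood occurring in a space whose universe is contained in $V_{t}(\varphi)$ already consists of points forcing $\Box \varphi$. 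Apart from this step, the whole argument is a routine transcription of the earlier transformation theorems.
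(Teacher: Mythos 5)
Your construction ($\mathfrak{W} = \{\langle \bigcup\mathcal{N}_{w}, \mathcal{Q}_{w}\rangle;\ w\in W\}$ with $V_{t}(q)=V_{\mathcal{N}}(q)$) and the overall inductive strategy coincide exactly with the paper's own proof. Your treatment of the $\rightarrow$ case is in fact more careful than the paper's: you allow the witnessing set to be $\text{min}\,\mathcal{O}^{v}_{\mathcal{Q}_{x}}$ for an arbitrary point $v$ of the universe and close the argument via the clause ``for every $u\in X$ we have $\bigcap\mathcal{N}_{u}\subseteq X$'', whereas the paper simply asserts that the witness ``is the minimal $\mathcal{Q}_{x}$-open $x$-neighbourhood, i.e.\ $\bigcap\mathcal{N}_{x}$'', which is not what Def.~\ref{defin} says.

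The residual $\Box$-case you flag is a genuine gap, and it is present --- unacknowledged --- in the paper's proof as well: there too the witness $X$ in $V_{t}(\Box\varphi)$ is silently identified with $\bigcap\mathcal{N}_{x}$, although clause (4) of Def.~\ref{defin} admits $X=\text{min}\,\mathcal{O}^{v}_{\tau}$ for \emph{any} $v\in T$. Your worry is not hypothetical: take $W=\{x,v,u\}$ with $\bigcap\mathcal{N}_{x}=\{x\}$, $\bigcup\mathcal{N}_{x}=\{x,v\}$, $\bigcap\mathcal{N}_{v}=\{v\}$, $\bigcup\mathcal{N}_{v}=\{v,u\}$, $\mathcal{N}_{u}=\{\{u\}\}$; all conditions of Def.~\ref{nimdef} hold (the minimal neighbourhoods are singletons, so the $\rightarrow$-, $\Box$- and $T$-conditions are immediate). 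Put $V_{\mathcal{N}}(p)=\{x,v\}$. Then $v\nVdash\Box p$ because $u\in\bigcup\mathcal{N}_{v}\setminus V_{\mathcal{N}}(p)$; yet in the transformed model the space $\langle\{x,v\},\mathcal{Q}_{x}\rangle$ has its universe contained in $V_{t}(p)$ and $\text{min}\,\mathcal{O}^{v}_{\mathcal{Q}_{x}}=\{x,v\}\ni v$, so $v\in V_{t}(\Box p)$ and pointwise equivalence already fails at the first modal step. So neither your sketch nor the paper's proof actually establishes the theorem as stated; the statement needs repair, e.g.\ by restricting the witnesses in clause (4) of Def.~\ref{defin} to the minimal neighbourhoods of the ``centres'' (which is precisely the role the distinguished sets $D^{T}$ play in the \multitop setting), or by imposing your transitivity-style condition $v\in\bigcup\mathcal{N}_{w}\Rightarrow\bigcup\mathcal{N}_{v}\subseteq\bigcup\mathcal{N}_{w}$ on the frames. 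In short: you did not close the gap, but you located exactly the step at which the paper's own argument breaks down, and your proposed remedies are the right kind of fix.
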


\begin{proof}

Assume that we have $M_{\mathcal{N}} = \langle W, \mathcal{N}, V_{\mathcal{N}} \rangle$. Now let us consider the following structure: $M_{t} = \langle W, \mathfrak{W}, V_{t} \rangle$ where:

\begin{enumerate}

\item $\mathfrak{W} = \{ \langle \bigcup \mathcal{N}_{w}, \mathcal{Q}_{w} \rangle; w \in W \}$

\item for each $q \in PV$, $V_{t}(q) = V_{\mathcal{N}}(q)$

\end{enumerate}

It is easy to check that $\langle W, \mathfrak{W} \rangle$ is a well-defined \multitopp-frame. Let us prove pointwise equivalency by means of induction.

\item $\rightarrow$

($\Rightarrow$)

Suppose that $w \Vdash \varphi \rightarrow \psi$. Thus $\bigcap \mathcal{N}_{w} \subseteq \{v \in W; v \nVdash \varphi$ or $v \Vdash \psi\}$. The last set - by induction hypothesis - is equal to $-V_{t}(\varphi) \cup V_{t}(\psi)$. Moreover, $\bigcap \mathcal{N}_{w}$ is an intersection of all $w_{\min}$-open sets (recall Def. \ref{defin}) and $w \in \bigcap \mathcal{N}_{w}$. Thus $w \in V_{t}(\varphi \rightarrow \psi)$.

($\Leftarrow$)

Assume that $w \in V_{t}(\varphi \rightarrow \psi)$. This means two things. First, there is $X \subseteq W$ such that $w \in X$ and $X \subseteq -V_{t}(\varphi) \cup V_{t}(\psi)$. Second, there is (for certain $x \in W$) $\langle \bigcup \mathcal{N}_{x}, \mathcal{Q}_{x} \rangle \in \mathfrak{W}$ such that $X$ is minimal $\mathcal{Q}_{x}$-open $x$-neighborhood. In fact, it means that $X = \bigcap \mathcal{N}_{x}$. So $\bigcap \mathcal{N}_{x} \subseteq -V_{t}(\varphi) \cup V_{t}(\psi) = $[ind. hyp.]$ -V_{\mathcal{N}}(\varphi) \cup V_{\mathcal{N}}(\psi) = \{z \in W; z \nVdash \varphi$ or $z \Vdash \psi\}$. Then, in particular, $x \Vdash \varphi \rightarrow \psi$ and also $w \Vdash \varphi \rightarrow \psi$ (because $w \in \bigcap \mathcal{N}_{x}$ and we have intuitionistic monotonicity of forcing).

\item $\Box$

$\Rightarrow$

Suppose that $w \Vdash \Box \varphi$. Thus $\bigcup \mathcal{N}_{w} \subseteq \{v \in W; v \Vdash \varphi\}$. The last set is - by induction hypothesis - equal to $V_{t}(\varphi)$. We can say that conditions from Def. \ref{defin} are satisfied: our $X$ is $\bigcap \mathcal{N}_{w}$ and our topological space is $\langle \bigcup \mathcal{N}_{w}, \mathcal{Q}_{w} \rangle$. Thus $w \in V_{t}(\Box \varphi)$.

$\Leftarrow$

Assume that $w \in V_{t}(\Box \varphi)$. Thus, we have $X \subseteq W$ such that $w \in X$ and there is (for certain $x \in W$) $\langle \bigcup \mathcal{N}_{x}, \mathcal{Q}_{x} \rangle \in \mathfrak{W}$ for which $X$ is minimal $\mathcal{Q}_{x}$-open $x$-neighborhood (i.e. $\bigcap \mathcal{N}_{x}$) - and $\bigcup \mathcal{N}_{x} \subseteq V_{t}(\varphi)$. By induction hypothesis $\bigcup \mathcal{N}_{x} \subseteq V_{\mathcal{N}}(\varphi)$. Thus, $x \Vdash \Box \varphi$. By monotonicity of forcing, $w \Vdash \varphi$.

\end{proof}

\section{Summary}

In this paper we used a lot of notions and symbols. We have introduced three different concepts of multi-topological frames (models). Moreover, we used the notion of \textit{neighborhood} in three ways. First, we spoke about the class of all neighborhood structures (\nima-frames). Second, we made references to neighborhoods in the topological sense. Third, we used those topological neighborhoods (and other tools) to transform multi-topological frame into certain specific \nima-frame. Hence, we shall repeat the most important things and sum up our considerations.

In section \ref{neigh} we have described neighborhood semantics for intuitionistic modal logic. It is based on the notions of \textit{minimal} ("intuitionistic") and \textit{maximal} ("modal") neighborhoods.

In section \ref{mtf} we have introduced \multitop-frames (models). They are collections of topological spaces. These spaces can intersect or form unions. We assumed that each space $\langle T, \tau \rangle$ has certain distinguished open set $D^{T}$. Then we have shown how it is possible to treat \nima-frames as \multitop-frames. Shortly speaking, the main idea is to make connection between maximal (resp. minimal) neighborhoods and universes $T$ (resp. distinguished sets).

In section \ref{sect} we spoke about \multitopp-frames (models). They are similar to the class of \multitop - but each topology is Alexandroff space and we do not introduce distinguished sets anymore. We have shown how to transform those structures into neighborhood models. Let us repeat main steps of this reasoning. Assume that $W$ is the whole universe of a given \multitopp-frame (i.e. it is set-theoretic sum of universes of all topological spaces of which frame consists). Now let us take an arbitrary $w \in W$. For each topology $\tau$ we have minimal $\tau$-open $w$-neighborhood (because of Alexandroff property). We take intersection of all such minimal neighborhoods and treat it as $\bigcap \mathcal{N}_{w}$ - i.e., as the minimal $w$-neighborhood in the sense of \nima-frames. As for the maximal neighborhood, we take intersection of all topological spaces to which $w$ belongs.

In section \ref{top2} we came back to \nima-frames but we introduced another topology in those structures (different than in section \ref{mtf}). We show that it is possible to transform \nima-models with this topology into \multitoppp-multi-topological models - which are based on \multitopp-frames but with different valuation than in section \ref{sect}.

\end{document}